\documentclass[a4paper,10pt]{amsart}  

\usepackage{dacxjo} 

\title{Non-normal purely log terminal centres in characteristic $p \geq 3$}
\author{
Fabio Bernasconi}

\subjclass[2010]{14E30, 14J17}
\keywords{Fano varieties, vanishing theorems, MMP singularities, positive characteristic}
\newcommand{\Addresses}{{
  \bigskip
  \footnotesize

  \textsc{Department of Mathematics, Imperial College, London, 180 Queen’s Gate,
  London SW7 2AZ, UK}\par\nopagebreak
  \textit{E-mail address:} \texttt{f.bernasconi15@imperial.ac.uk}
  \medskip
}}

\begin{document}
\begin{abstract} 
In this note we show, building on a recent work of Totaro \cite{Tot17}, that for every prime number $p \geq 3$ there exists a purely log terminal pair $(Z,S)$ of dimension $2p+2$ whose plt centre $S$ is not normal. 
\end{abstract}
\maketitle
\section{Introduction}
The minimal model program (MMP for short) has become a fundamental tool in the study of the geometry of algebraic varieties. While a large part of the program is known to hold in characteristic zero (see the seminal work \cite{BCHM10}), we have very few results for varieties over fields of positive characteristic in arbitrary dimension. In particular, the existence of flips is still a major open problem in characteristic $p>0$. The main difficulty lies in the failure of Kodaira type vanishing theorems, which makes the study of  singularities in positive characteristic far more complicated than in characteristic zero. For example we know that there exist Kawamata log terminal (klt) and even terminal singularities which are not Cohen-Macaulay in positive characteristic (see \cite{CT16}, \cite{Kov17}, \cite{Ber17}, \cite{Tot17} and \cite{Yas17}). \\
\indent Another important property of singularities in characteristic zero due to Kawamata-Viehweg vanishing is the normality of the centre $S$ of a purely log terminal (plt) pair $(X,S+B)$ (see \cite[Proposition 5.51]{KM98}), which is a crucial ingredient in the proof of the existence of (pl-)flips in characteristic zero (see \cite{HM07} and \cite{BCHM10}). Although Kodaira vanishing is no longer valid in positive characteristic, in \cite[Theorem 3.1.1 and Proposition 4.1]{HX15} the authors show the normality of plt centres for threefolds over an algebraically closed field of characteristic $p > 5$ and, using tools from the theory of $F$-singularities, they succeed in proving the existence of (pl-)flips (\cite[Theorem 4.12]{HX15}). This result was the starting point of a series of works (see \cite{CTX15}, \cite{Bir16}, \cite{BW17} and \cite{HNT17}) which established a large part of the log MMP for threefolds over fields of characteristic $p > 5$. \\
\indent One might thus be led to conjecture that plt centres are normal also in positive characteristic. Unfortunately, this is not the case: in \cite[Theorem 1]{CT16}, the authors construct an example of a plt threefold with non-normal centre in characteristic two. Inspired by their work, we use a recent series of examples of Fano varieties violating Kodaira vanishing built by Totaro (see \cite{Tot17}) to construct new examples of non-normal plt centres for every prime $p \geq 3$:
\begin{thm}\label{main}
Let $k$ be an algebraically closed field of characteristic $p \geq 3$. Then there exists a log pair $(Z,S)$ such that 
\begin{enumerate}
\item $Z$ is an affine variety over $k$ with terminal singularities of dimension $2p+2$ and $S$ is a prime divisor,
\item $(Z,S)$ is a purely log terminal pair with $K_Z+S$ Cartier,
\item $S$ is not normal.
\end{enumerate}
\end{thm}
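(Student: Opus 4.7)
The plan is to construct $(Z,S)$ as an affine cone over a projective pair $(Y,D)$, where $Y$ is a Fano-type variety extracted from \cite{Tot17}, paralleling the characteristic-two construction of Cascini--Tanaka \cite{CT16}. The key input is Totaro's production, for each prime $p \geq 3$, of a terminal Fano variety of dimension $2p+1$ on which a Kodaira-type vanishing fails in a controlled fashion; the cone over such a variety, equipped with the cone over a suitable divisor, will yield the desired plt pair with non-normal centre in dimension $2p+2$.

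First, I would isolate (possibly after a mild modification of Totaro's examples) a projective variety $Y$ of dimension $2p+1$ with terminal singularities, an ample Cartier divisor $H$, and a prime divisor $D \subset Y$ satisfying: (a) $(Y,D)$ is plt; (b) $K_Y+D \sim -H$, so that $-(K_Y+D)$ is ample (the log Fano condition needed to make the cone plt); and (c) there exists $m \geq 1$ with $H^1(Y, mH - D) \neq 0$. By the ideal sheaf sequence $0 \to \mathcal{O}_Y(mH - D) \to \mathcal{O}_Y(mH) \to \mathcal{O}_D(mH|_D) \to 0$, condition (c) is precisely the non-surjectivity of the restriction $H^0(Y, mH) \to H^0(D, mH|_D)$ for this value of $m$, and is a quantitative form of the Kodaira-type vanishing failure established in \cite{Tot17}.

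Next, I would form the affine cone $Z := \operatorname{Spec} \bigoplus_{m \geq 0} H^0(Y, mH)$ with vertex $v$, and let $S \subset Z$ be the cone over $D$, producing an affine variety of dimension $2p+2$ together with a prime Weil divisor (condition (1)). Blowing up $v$ yields $\pi\colon \widetilde{Z} \to Z$ with exceptional divisor $E \simeq Y$ and normal bundle $\mathcal{O}_E(-H)$; standard discrepancy formulas for affine cones then transfer the plt property of $(Y,D)$ to the plt property of $(Z,S)$, (b) ensures that $K_Z+S$ is Cartier, and terminality of $Z$ follows from terminality of $Y$ combined with the ampleness of $-K_Y = H + D$, yielding condition (2). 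Finally, the normalisation of $S$ at $v$ has coordinate ring $\bigoplus_m H^0(D, mH|_D)$, and condition (c) exhibits a graded component strictly larger than that of $\mathcal{O}_S$, so $S$ is non-normal at $v$, giving condition (3).

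The main obstacle I anticipate lies in (c) and its compatibility with (a) and (b): Totaro's Fano varieties are built as specific projective-bundle fibrations over a base for which Frobenius splitting (and hence Kodaira vanishing) fails, and the divisor $D$ must be placed in general enough position to make $(Y,D)$ plt and to preserve terminality of the cone, while still retaining the cohomological defect captured by (c). Converting Totaro's vanishing failure (which a priori may live in a different cohomological degree) into the precise twist $mH - D$ appearing here, and verifying strict positivity of the discrepancy of $E$ with respect to $(Z,S)$, will require a careful inspection of the geometry of \cite{Tot17}.
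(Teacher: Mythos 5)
Your overall architecture is the same as the paper's: take the affine cone over Totaro's Fano variety $X$ (dimension $2p+1$, $-K_X=2A$, $H^1(X,\mathcal{O}_X(A))\neq 0$), let $S$ be the cone over a divisor $D$, transfer plt/terminal/Cartier via the standard cone discrepancy formulas, and detect non-normality of $S$ through the failure of surjectivity of $H^0(X,mA)\to H^0(D,mA|_D)$. The paper resolves your ``anticipated obstacle'' very simply: one takes $Y=X$ itself (smooth, no modification needed) and $D=E$ a general smooth member of $|A|$, so that $K_X+E\sim -A$ is automatic from $-K_X=2A$, the pair $(X,E)$ is log smooth, and the relevant twist is $mA-E\sim (m-1)A$, placing Totaro's non-vanishing $H^1(X,\mathcal{O}_X(A))\neq 0$ exactly where it is needed.

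There is, however, a genuine logical gap in your key step. From the exact sequence
\[H^0(Y,mH)\rightarrow H^0(D,mH|_D)\rightarrow H^1(Y,mH-D)\rightarrow H^1(Y,mH),\]
the non-vanishing $H^1(Y,mH-D)\neq 0$ is \emph{necessary} but not \emph{sufficient} for the restriction map to fail to be surjective: surjectivity fails if and only if the map $H^1(Y,mH-D)\rightarrow H^1(Y,mH)$ is not injective, so your condition (c) does not by itself yield condition (3). The paper closes this gap with a Serre vanishing argument: since $H^1(X,\mathcal{O}_X(nA))=0$ for $n\gg 0$, one may choose the \emph{largest} $n\geq 2$ with $H^1(X,\mathcal{O}_X((n-1)A))\neq 0$; then $H^1(X,\mathcal{O}_X(nA))=0$ and the exact sequence forces non-surjectivity in degree $n$. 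The same Serre vanishing also gives surjectivity of the restriction maps for all $m>n$, which is what makes $\nu\colon C_a(E,A|_E)\to S$ finite and \emph{birational}, hence the normalisation --- a point you assert (``the normalisation of $S$ at $v$ has coordinate ring $\bigoplus_m H^0(D,mH|_D)$'') but do not justify. With these two additions your argument matches the paper's proof.
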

As in \cite[Theorem 1.2]{CT16}, we deduce that the lifting lemma of Hacon-M$^\text{c}$Kernan for pluri-log-canonical forms from plt centres in characteristic zero (see \cite[Theorem 5.4.21]{HM07}) fails in general over fields of positive characteristic:
\begin{corollary} \label{lift}
With the same notation as in Theorem \ref{main}, there exists a projective birational morphism $f \colon (Y,S^Y) \rightarrow (Z,S)$ such that
\begin{enumerate} 
\item $(Y,S^Y)$ is log smooth and $S^Y$ is a prime divisor,
\item $-\Exc(f)$ is an ample divisor,
\item $K_Y+S^Y$ is semiample and big, and
\item for every $m \geq 0$, the restriction map
\[H^0(Y, \mathcal{O}_Y(m(K_Y+S^Y))) \rightarrow H^0(S^Y, \mathcal{O}_Y(mK_{S^Y})),  \]
is not surjective.
\end{enumerate}
\end{corollary}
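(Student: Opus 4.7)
The plan is to construct $f \colon Y \to Z$ as a log resolution of $(Z, S)$ and deduce (1)--(4) along the lines of \cite[Theorem 1.2]{CT16}. I would start by taking any log resolution $f \colon Y \to Z$ of $(Z,S)$ and then modifying it further by blowing up an ideal sheaf supported on the exceptional locus so that $-\Exc(f)$ is $f$-ample; since $Z$ is affine, relative ampleness over $Z$ is the same as absolute ampleness, giving (2). Letting $S^Y$ denote the strict transform of $S$, the pair $(Y, S^Y)$ is log smooth with $S^Y$ prime by construction, yielding (1).

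For (3), the plt hypothesis together with $K_Z + S$ being Cartier lets me write
\[ K_Y + S^Y = f^*(K_Z + S) + E, \]
where $E$ is an effective, $f$-exceptional integer divisor (plt gives discrepancies $> -1$, and the Cartier condition forces them to be integral, hence $\geq 0$). Since $Z$ is affine, $\mathcal{O}_Z(K_Z + S)$ is globally generated, so $f^*(K_Z + S)$ is base-point free on $Y$; with a careful choice of $f$ this yields the required semiampleness and bigness of $K_Y + S^Y$.

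For the main point (4), I would apply $f_*$ to the short exact sequence
\[ 0 \to \mathcal{O}_Y\bigl(m K_Y + (m-1) S^Y\bigr) \to \mathcal{O}_Y\bigl(m(K_Y + S^Y)\bigr) \to \mathcal{O}_{S^Y}(m K_{S^Y}) \to 0, \]
noting that by the projection formula and $f_* \mathcal{O}_Y(mE) = \mathcal{O}_Z$ (with $Z$ normal and $E$ effective exceptional), $f_* \mathcal{O}_Y(m(K_Y + S^Y)) = \mathcal{O}_Z(m(K_Z + S))$. Hence $H^0(Y, m(K_Y + S^Y)) = H^0(Z, m(K_Z + S))$, and every section lifted from $Y$ arises as the restriction to $S^Y$ of a section on the non-normal $S$. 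Let $\nu \colon S^N \to S$ denote the normalization, through which the birational map $S^Y \to S$ factors since $S^Y$ is smooth; the non-normality $\mathcal{O}_S \subsetneq \nu_* \mathcal{O}_{S^N}$ propagates to the pluri-log-canonical level, and for some $m$ one finds sections of $m K_{S^Y}$ whose images on $S^N$ do not descend to $S$, so they cannot lie in the image of the restriction map.

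The principal obstacle is to exhibit such an $m$ explicitly: one must inspect the Totaro-type construction underlying $(Z, S)$ and verify that the cokernel $\nu_* \mathcal{O}_{S^N}/\mathcal{O}_S$ contributes a nonzero class at the level of pluri-log-canonical forms for some concrete $m$, at which point non-surjectivity follows.
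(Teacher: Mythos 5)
Your overall strategy for (4) --- reducing non-surjectivity of the restriction maps to the failure of $\mathcal{O}_S \to \nu_*\mathcal{O}_{S^\nu}$ to be surjective --- is the right one, but there are two concrete gaps. First, the choice of $Y$: the paper does not take an arbitrary log resolution but the specific one $Y = BC_a(X,\mathcal{O}_X(A))$, the $\mathbb{A}^1$-bundle over $X$, with $f$ the contraction of the negative section $X^-$. Because $K_X+E \sim -A$, Proposition \ref{plt} applies with $r=-1$ and gives the exact crepant identity $K_Y+S^Y = f^*(K_Z+S)$, with no exceptional correction. Your decomposition $K_Y+S^Y = f^*(K_Z+S)+E$ with $E \geq 0$ exceptional is correct as far as it goes, but if $E \neq 0$ the divisor $f^*(\text{ample})+E$ is in general \emph{not} semiample: there is a curve $C$ contracted by $f$ with $E \cdot C < 0$, so $(f^*(K_Z+S)+E)\cdot C < 0$ and the divisor is not even nef. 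So ``with a careful choice of $f$'' is carrying all of property (3); the careful choice is precisely the crepant one, and you need to exhibit it rather than postulate it.

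Second, and more seriously, the statement requires non-surjectivity for \emph{every} $m \geq 0$, whereas your plan only aims to ``exhibit such an $m$'' and leaves even that as ``the principal obstacle.'' The missing ingredient is Proposition \ref{Pic}: $\Pic(Z)=0$, so the Cartier divisor $K_Z+S$ is linearly trivial; hence $K_Y+S^Y = f^*(K_Z+S) \sim 0$ and, by adjunction, $K_{S^Y} \sim 0$. The projection formula then gives $f_*\mathcal{O}_Y(m(K_Y+S^Y)) = f_*\mathcal{O}_Y = \mathcal{O}_Z$ and $f_*\mathcal{O}_{S^Y}(mK_{S^Y}) = f_*\mathcal{O}_{S^Y} = \nu_*\mathcal{O}_{S^\nu}$ (the last equality by Zariski's main theorem, as $S^Y \to S^\nu$ is a projective birational morphism of normal varieties), independently of $m$. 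Since $Z$ is affine, the restriction map on global sections is therefore identified, for all $m$ at once, with $H^0(Z,\mathcal{O}_Z) \to H^0(S^\nu,\mathcal{O}_{S^\nu})$, which factors through the non-surjective map $\mathcal{O}_S \to \nu_*\mathcal{O}_{S^\nu}$. This is exactly the ``propagation to the pluri-log-canonical level'' you invoke, and it becomes automatic and uniform in $m$ once $K_Z+S\sim 0$ is observed; without that observation your argument does not close, and the exact sequence you propose to push forward plays no role in the paper's proof.
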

In the last section we give examples of terminal Fano varieties with non-vanishing intermediate cohomology group by taking the projective cone over Totaro's example.
\begin{thm}\label{terminalFanononvanishingcohomology}
Let $k$ be an algebraically closed field of characteristic $p \geq 3$. Then there exists a Fano variety $Z$ with terminal singularities of dimension $2p+2$ over $k$ such that
\[H^2(Z, \mathcal{O}_Z) \neq 0. \]
\end{thm}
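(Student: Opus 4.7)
The plan is to take $Z$ to be the projective cone over the smooth Fano variety $X$ of dimension $2p+1$ provided by Totaro \cite{Tot17} (as already used in the proof of Theorem \ref{main}), with respect to the same ample line bundle $L$. We then have to verify that $Z$ is a $\mathbb{Q}$-Gorenstein Fano variety of dimension $2p+2$ with terminal singularities, compute $H^2(Z,\mathcal{O}_Z)$, and invoke Totaro's Kodaira-type non-vanishing.

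For the geometric part, let $f\colon Y := \mathbb{P}_X(\mathcal{O}_X \oplus L^{-1}) \to Z$ be the natural resolution of the vertex $v$: it contracts the section $E_- \subset Y$ (with normal bundle $L^{-1}$) to $v$, while the other section $E_+$ (with normal bundle $L$) is the hyperplane at infinity of $Z$. The relative Euler sequence gives $K_Y \equiv \pi^*K_X - E_+ - E_-$, and the identity $\pi^*L \equiv E_+ - E_-$ is immediate. Assuming $L$ was chosen in the proof of Theorem \ref{main} so that $-K_X \equiv rL$ for some rational $r \geq 2$, a short computation yields $-f^*K_Z = (r+1)\, f^*\mathcal{O}_Z(1)$ (so $-K_Z$ is ample) and a discrepancy of $r-1 \geq 1$ along $E_-$, whence $Z$ is a terminal Fano variety.

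For the cohomological part, identify $U := Z \setminus \{v\}$ with the total space of $L$ over $X$; since the projection $U \to X$ is affine, $H^i(U, \mathcal{O}_U) = \bigoplus_{m \geq 0} H^i(X, L^{-m})$. Combine this with the standard formula for the local cohomology of the section ring at the vertex,
\[
H^i_v(Z, \mathcal{O}_Z) = \bigoplus_{d \in \mathbb{Z}} H^{i-1}(X, L^d) \qquad (i \geq 2),
\]
and the weight decomposition for the natural $\mathbb{G}_m$-action on $Z$, and plug both into the local cohomology long exact sequence
\[
H^1(U, \mathcal{O}_U) \to H^2_v(Z, \mathcal{O}_Z) \to H^2(Z, \mathcal{O}_Z) \to H^2(U, \mathcal{O}_U) \to H^3_v(Z, \mathcal{O}_Z),
\]
which collapses to
\[
H^2(Z, \mathcal{O}_Z) \cong \bigoplus_{d \geq 1} H^1(X, L^d).
\]

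The proof then concludes by the same Kodaira-type vanishing failure used in the proof of Theorem \ref{main}, which furnishes $H^1(X, L^d) \neq 0$ for some $d \geq 1$ and hence $H^2(Z, \mathcal{O}_Z) \neq 0$. The main subtlety, and probably the only point requiring genuine care, is to arrange the $L$ in Totaro's construction so that simultaneously $-K_X \equiv rL$ with $r \geq 2$ (for $Z$ to be $\mathbb{Q}$-Gorenstein terminal) and $H^1(X, L^d) \neq 0$ for some $d \geq 1$; both are secured by the setup already used in Theorem \ref{main}.
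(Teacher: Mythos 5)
Your proposal is correct and follows essentially the same route as the paper: take the projective cone over Totaro's $X$ with respect to $A$, compute $H^2(Z,\mathcal{O}_Z)\cong\bigoplus_{m>0}H^1(X,\mathcal{O}_X(mA))$ via the local cohomology sequence at the vertex (the paper's Proposition \ref{cohomologyprojectivecone}), and check terminality and ampleness of $-K_Z$ on the resolution $\mathbb{P}_X(\mathcal{O}_X\oplus\mathcal{O}_X(-A))$ using $-K_X=2A$. The only cosmetic difference is in the Fano step, where you read off $-f^*K_Z=(r+1)f^*\mathcal{O}_Z(1)$ directly while the paper instead verifies that $-K_Y$ is big and nef with empty null locus; both are fine.
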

\subsection*{Acknowledgements}
I would like to express my gratitude to my advisor Paolo Cascini for his constant support and to Mirko Mauri, Hiromu Tanaka, Burt Totaro and Jakub Witaszek
for reading an earlier version of this note and for their insightful comments. This work was supported by the Engineering and Physical Sciences Research Council [EP/L015234/1], The EPSRC Centre for Doctoral Training in Geometry and Number Theory (The London School of Geometry and Number Theory), University College London and Imperial College, London.
\section{Preliminaries}
\subsection{Notation} Throughout this note, $k$ denotes an algebraically closed field. By \emph{variety} we mean an integral scheme which is separated and of finite type over $k$.  If $X$ is a normal variety, we denote by $K_X$ the canonical divisor class. We say that $(X,\Delta)$ is a \emph{log pair} if $X$ is a normal variety, $\Delta$ is an effective $\Q$-divisor and $K_X+\Delta$ is $\Q$-Cartier. We say it is \emph{log smooth} if $X$ is smooth and $\Supp(\Delta)$ is a snc divisor. We refer to \cite{KM98} and \cite{Kol13} for the definition of the singularities appearing in the MMP (e.g. \emph{terminal, klt, plt, dlt}). We say $f \colon Y \rightarrow X$ is a \emph{log resolution} if $f$ is a birational morphism, $Y$ is regular, $\Exc(f)$ has pure codimensione one and the pair $(Y, \Supp(f^{-1}\Delta + \Exc(f)))$ is log smooth. We denote by $\rho(X):= \rho(X/\Spec k)$ the \emph{Picard number} of $X$.
\subsection{Fano varieties violating Kodaira vanishing in positive characteristic}
While counterexamples to Kodaira vanishing for general projective varieties in positive characteristic are quite abundant in literature (see for example \cite{Ray78}), until the recent work of Totaro (see \cite{Tot17}) only few examples of smooth Fano varieties violating the Kodaira vanishing theorem were known (see \cite{LR97} and \cite{Kov17}). We recall the properties of Totaro's example we need to prove Theorem \ref{main}:
\begin{thm}[{\cite[Theorem 2.1]{Tot17}}]\label{riassunto}
Let $k$ be an algebraically closed field of characteristic $p \geq 3$. Then there exists a smooth Fano variety $X$ over $k$ of dimension $2p+1$ with a very ample Cartier divisor $A$ such that
\begin{enumerate}
\item $\rho(X)=2$,
\item $-K_X=2A$ and
\item $H^1(X, \mathcal{O}_X(A)) \neq 0$.
\end{enumerate}
\end{thm}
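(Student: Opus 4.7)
The plan is to construct $X$ by combining a Raynaud--Tango style violation of Kodaira vanishing on a lower-dimensional base with a projective bundle construction, arranged so that the total space is a smooth Fano variety of Picard rank two and anticanonical index at least two.

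First, I would look for a smooth projective base $B$ of Picard rank one in characteristic $p$ supporting an ample line bundle $M$ with $H^1(B, M) \neq 0$. The classical prototype is a Tango curve $C$ equipped with a line bundle $N$ whose Frobenius pullback $F^*N$ has a global section with divisor a $p$-th power; this produces a non-split short exact sequence coming from Frobenius and hence a non-trivial class in $H^1(C, N^{-1})$. Higher-dimensional Tango-type bases can be obtained by passing to projective bundles over such curves, or to specific hypersurfaces in weighted projective spaces tailored to have the right Frobenius behaviour.

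Second, over $B$ I would build a rank-$r$ vector bundle $E$, for instance as an extension
\[ 0 \to \mathcal{O}_B \to E \to L \to 0 \]
with $L$ a suitable twist of $M$, and set $X := \mathbb{P}_B(E)$ with projection $\pi\colon X \to B$. The projective bundle formula immediately gives $\rho(X) = \rho(B) + 1 = 2$, and the relative canonical bundle formula
\[ -K_X = r\xi - \pi^*(K_B + \det E) \]
allows me, after tuning $r$, $\dim B$, and the normalization of $E$, to single out an ample Cartier divisor $A$ with $-K_X = 2A$. The dimension constraint $\dim X = \dim B + r - 1 = 2p+1$ fixes the numerical balance between base and fibre. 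To see that $H^1(X, A) \neq 0$, I would run the Leray spectral sequence for $\pi$: the pushforward $\pi_* \mathcal{O}_X(A)$ is a symmetric power of $E$ twisted by a line bundle on $B$, and the boundary map of the extension defining $E$ should carry the Tango--Raynaud class in $H^1(B, M) \neq 0$ to a non-zero class in $H^1(X, A)$.

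The main obstacle is the simultaneous calibration: $X$ must be smooth (requires $E$ locally free), Fano (requires the combination giving $-K_X$ to be ample), of anticanonical index divisible by two, of Picard rank exactly two, and of dimension exactly $2p+1$, uniformly for every odd prime $p$, while ensuring that the $H^1$ class does not get killed by Serre duality, by a Frobenius splitting of $X$, or by cancellations in the spectral sequence. Threading all of these constraints at once is the technical heart of Totaro's construction; the concrete realisation presumably involves an explicit unstable bundle on a weighted projective base whose extension class is built from a Tango structure and whose Frobenius-theoretic behaviour can be tracked through the projectivization.
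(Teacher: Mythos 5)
This statement is not proved in the paper at all: Theorem \ref{riassunto} is quoted verbatim from \cite[Theorem 2.1]{Tot17}, and the paper's ``proof'' is the citation. So the only question is whether your blind attempt would stand on its own as a proof of Totaro's theorem. It would not. You have correctly identified the general shape of Totaro's construction --- a projective bundle over a Tango--Raynaud-type base of Picard rank one, with the failure of vanishing propagated from a Frobenius-non-split extension class via the Leray spectral sequence --- but every step that carries actual content is deferred. The bundle $E$ is never specified (and your illustrative extension $0 \to \mathcal{O}_B \to E \to L \to 0$ with $L$ a line bundle has rank $2$, which over a curve base gives a surface, not dimension $2p+1$); the divisor $A$ is not exhibited, so neither its very ampleness (which the statement requires, not mere ampleness) nor the exact equality $-K_X = 2A$ is checked; and the crucial claim $H^1(X, \mathcal{O}_X(A)) \neq 0$ is supported only by the assertion that the boundary map ``should carry'' the Tango class to a nonzero class. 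That last point is precisely where the work lies: one must identify $\pi_*\mathcal{O}_X(A)$ as a concrete twist of a symmetric power of $E$, track the extension class through it, and rule out cancellation --- none of which is attempted. You say as much yourself when you write that threading the constraints ``is the technical heart of Totaro's construction''; a proof cannot outsource its heart to the theorem it is trying to prove.

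In short: as a reconstruction of the strategy your sketch is reasonable and broadly faithful to how Totaro proceeds, but as a proof it has a gap coextensive with its content. For the purposes of this paper nothing is lost, since the result is used as a black box; if you want an actual argument you should work through \cite[Section 2]{Tot17}, where the base curve, the bundle, and the cohomology computation are made explicit.
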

This theorem has already been a prolific source of examples of pathologies in positive characteristic: by taking the cone over $X$, Totaro shows that for every $p \geq 3$ there exists a terminal not Cohen-Macaulay singularity in dimension $2p+2$ (\cite[Corollary 2.2]{Tot17}) and in \cite{AZ17} the authors construct non-liftable Calabi-Yau varieties by considering anticanonical sections of $X$ and double covers along a general member of $|-2K_X|$.
\subsection{Affine cones}
For the theory of cones over algebraic varieties and the study of their singularities we refer to \cite[Chapter 3]{Kol13}. Here we recall only what we need to prove our main result. \\
\indent Let $(X,\Delta)$ be a log pair over $k$ and let $L$ be an ample line bundle on $X$. We denote by $C_a(X,L):=\Spec_X \sum_{m \geq 0} H^0 (X,L^{\otimes m})$ the cone over $X$ induced by $L$ and by $\Delta_{C_a(X,L)}$ the induced $\Q$-divisor on $C_a(X,L)$. Over $X$ we consider the $\mathbb{A} ^1-$bundle $\pi \colon BC_a(X,L) := \Spec_X \sum_{m \geq 0} L^{\otimes m} \rightarrow X$ and we have the following diagram:
\begin{equation*}
\xymatrix{
	 BC_a(X,L) \ar[r]^{\qquad \pi} \ar[d]_{f} & X  \\
	 C_a(X,L) &,
}
\end{equation*}
where $f$ is the birational morphism contracting the section $X^{-}$ of $\pi$ with anti-ample normal bundle $\mathcal{N}_{X^{-}/BC_a(X,L)} \simeq L^{\vee}$. The following describes the divisor class group of the cone and the condition under which its canonical class is $\Q$-Cartier:
\begin{prop} [{\cite[Proposition 3.14]{Kol13}}] \label{Pic}
With the same notation as above, we have
\begin{enumerate}
\item $\Cl(C_a(X,L))=\Cl(X)/ \langle L \rangle$,
\item $\Pic(C_a(X,L))=0$, and
\item $m(K_{C_a(X,L)}+\Delta_{C_a(X,L)})$ is Cartier if and only if there exists $r \in \Q$ such that $\mathcal{O}_X(m(K_X+\Delta)) \simeq L^{rm}$. \label{canonicalclass}
\end{enumerate}
\end{prop}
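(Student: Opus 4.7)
The plan is to exploit the birational contraction $f: BC_a(X,L) \to C_a(X,L)$, which is an isomorphism outside the negative section $X^-$ and the vertex $v$, in order to transport divisor-theoretic information from the affine-bundle space $BC_a(X,L)$, where the geometry is transparent, down to the cone.

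For (1), I would first observe that, as $\pi: BC_a(X,L) \to X$ is an $\mathbb{A}^1$-bundle, the pullback $\pi^*: \Cl(X) \xrightarrow{\sim} \Cl(BC_a(X,L))$ is an isomorphism, with inverse given by restriction to the zero section $X^-$. Since $f$ contracts only the prime divisor $X^-$ onto the codimension $\geq 2$ vertex, the excision sequence
\begin{equation*}
\mathbb{Z}\cdot [X^-] \longrightarrow \Cl(BC_a(X,L)) \longrightarrow \Cl(C_a(X,L)) \longrightarrow 0
\end{equation*}
is exact. Since $X^-$ is a Cartier divisor on $BC_a(X,L)$ with $\mathcal{O}_{BC_a(X,L)}(X^-)|_{X^-} \simeq \mathcal{N}_{X^-/BC_a(X,L)} \simeq L^\vee$, one has $[X^-] = \pi^*[L^\vee]$ under the above isomorphism. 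As $\langle L^\vee\rangle = \langle L\rangle$ in $\Cl(X)$, this yields $\Cl(C_a(X,L)) = \Cl(X)/\langle L\rangle$.

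For (2), I would use the graded structure of $R := \bigoplus_{m \geq 0} H^0(X, L^{\otimes m})$ with $R_0 = k$: the vertex is cut out by the irrelevant graded maximal ideal $R_{\geq 1}$, and the standard fact that a finitely generated projective graded module over such a positively graded $k$-algebra is free (a graded Nakayama argument, or equivalently the $\mathbb{G}_m$-linearizability of line bundles together with equivariant trivialization at the fixed vertex) yields $\Pic(C_a(X,L)) = 0$.

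Finally, (3) is a formal consequence of (1) and (2): since $\Pic(C_a(X,L)) = 0$, the Weil divisor $m(K_{C_a(X,L)}+\Delta_{C_a(X,L)})$ is Cartier precisely when the associated class in $\Cl(C_a(X,L)) = \Cl(X)/\langle L\rangle$ is trivial, i.e.\ when $m(K_X+\Delta) \sim sL$ in $\Cl(X)$ for some integer $s$; this is exactly $\mathcal{O}_X(m(K_X+\Delta)) \simeq L^{\otimes s}$, which is the claimed criterion with $r = s/m$. The main subtlety I expect is in step (2), where some care is needed to argue that every line bundle on the (possibly singular) affine cone is globally trivial; the remaining steps are formal manipulations with pullbacks along affine bundles and excision of a codimension-one locus.
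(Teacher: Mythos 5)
The paper offers no proof of this proposition at all: it is quoted verbatim from \cite[Proposition 3.14]{Kol13}, so the only meaningful comparison is with Koll\'ar's argument, and your proposal does follow essentially that route (excision through the $\mathbb{A}^1$-bundle $BC_a(X,L)$ for the class group, a graded-module argument at the vertex for the Picard group, and then (3) as a formal consequence). Parts (1) and (3) are fine as written; for (3) one should just add the one-line verification that under the isomorphism of (1) the class of $K_{C_a(X,L)}+\Delta_{C_a(X,L)}$ goes to that of $K_X+\Delta$, which follows from $K_{BC_a(X,L)} \equiv \pi^*K_X \pmod{\langle \pi^*L\rangle}$ together with $f$ being an isomorphism off $X^-$.

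The one real gap is the one you flagged yourself in (2): graded Nakayama applies to \emph{graded} projective modules, and an arbitrary invertible module over $R=\bigoplus_{m\geq 0}H^0(X,L^{\otimes m})$ is not graded a priori; invoking $\mathbb{G}_m$-linearizability is a legitimate fix but itself needs a citation and some care. The cleaner way to close this, and the one implicit in \cite{Kol13}, is to bypass linearization: since $C_a(X,L)$ is normal, $\Pic$ injects into $\Cl$, and by your part (1) every class in $\Cl(C_a(X,L))$ is represented by a $\mathbb{G}_m$-invariant Weil divisor $D$ (the cone over a divisor $D_X$ on $X$), whose divisorial sheaf is the canonically graded module $\bigoplus_{m\geq 0}H^0(X,\mathcal{O}_X(D_X+mL))$. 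If $D$ is Cartier at the vertex this module is free over the graded-local ring $R$, hence graded-free by graded Nakayama, i.e.\ isomorphic to $R(a)$; comparing graded pieces forces $D_X\sim aL$, so $[D]=0$ in $\Cl(X)/\langle L\rangle$. This proves (2) and the ``only if'' half of (3) simultaneously. With that substitution your argument is complete and agrees with the cited source.
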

From the point of view of the singularities of the MMP we have the following
\begin{prop}[{\cite[Lemma 3.1]{Kol13}}]\label{plt}
With the same notation as above, let us assume that $K_X + \Delta \sim_{\Q} rL$. Then we have
\begin{equation} \label{discrep}
K_{BC_a(X,L)}+\pi^*\Delta+(1+r)X^-=f^*(K_{C_a(X,L)}+\Delta_{C_a(X,L)});
\end{equation} 
and the pair $(C_a(X,L), \Delta_{C_a(X,L)})$ is terminal (resp. dlt) if and only if the pair $(X,\Delta)$ is terminal (resp. dlt) and $r < -1$ (resp. $r <0$).
\end{prop}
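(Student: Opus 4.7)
The plan is to establish the discrepancy formula \eqref{discrep} first, and then deduce the singularity characterization by pulling back a log resolution of $(X,\Delta)$ along the smooth $\mathbb{A}^1$-bundle $\pi$. First, I would identify $BC_a(X,L)$ with the total space of the line bundle $L^\vee$ on $X$: the relative tangent bundle is $\pi^*L^\vee$, whence $K_{BC_a(X,L)/X}=\pi^*L$ and so $K_{BC_a(X,L)}=\pi^*(K_X+L)$. Moreover, the tautological section of $\pi^*L^\vee$ vanishes with multiplicity one along the zero section, giving the linear equivalence $X^-\sim -\pi^*L$. Combining these two identities with the hypothesis $K_X+\Delta\sim_{\Q}rL$:
\[
K_{BC_a(X,L)}+\pi^*\Delta+(1+r)X^-\sim_{\Q}\pi^*\bigl((r+1)L\bigr)-(r+1)\pi^*L=0.
\]
By Proposition \ref{Pic}(\ref{canonicalclass}), $K_{C_a(X,L)}+\Delta_{C_a(X,L)}$ is $\Q$-Cartier, and since $\Pic(C_a(X,L))=0$ by Proposition \ref{Pic} it is $\Q$-linearly trivial; hence so is $f^*(K_{C_a(X,L)}+\Delta_{C_a(X,L)})$. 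Both sides of \eqref{discrep} are therefore $\Q$-Cartier and $\Q$-linearly trivial, and they agree on the complement of the unique $f$-exceptional divisor $X^-$; applying $f_*$ (and using $f_*(\pi^*\Delta)=\Delta_{C_a(X,L)}$) pins down the coefficient of $X^-$ to be exactly $1+r$, which establishes \eqref{discrep}.

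For the singularity characterization, I would take a log resolution $h\colon X'\to X$ of $(X,\Delta)$ and write $K_{X'}+h_*^{-1}\Delta=h^*(K_X+\Delta)+\sum_i a_iE_i$ with $a_i=a(E_i;X,\Delta)$. Pulling back the $\mathbb{A}^1$-bundle structure produces a smooth variety $BC_a(X',h^*L)$ fitting into a commutative square with morphisms $\tilde h\colon BC_a(X',h^*L)\to BC_a(X,L)$ and $\tilde\pi\colon BC_a(X',h^*L)\to X'$. Set $g:=f\circ\tilde h$. A direct inspection shows that the $g$-exceptional prime divisors are exactly the zero section $X^-_{X'}$ (collapsed to the vertex) and the pullbacks $\tilde\pi^*E_i$ of the $h$-exceptional divisors, while $\tilde h^*X^-=X^-_{X'}$ since $\tilde h$ is smooth near $X^-_{X'}$. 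Substituting $K_{BC_a(X',h^*L)}=\tilde\pi^*(K_{X'}+h^*L)$ into \eqref{discrep} yields
\[
K_{BC_a(X',h^*L)}=g^*\bigl(K_{C_a(X,L)}+\Delta_{C_a(X,L)}\bigr)-(1+r)X^-_{X'}+\sum_i a_i\,\tilde\pi^*E_i-\tilde\pi^*h_*^{-1}\Delta,
\]
so the exceptional discrepancies over $C_a(X,L)$ are exactly $-(1+r)$ along $X^-_{X'}$ and $a_i$ along $\tilde\pi^*E_i$. The terminal (resp.\ dlt) condition requires all of them to be $>0$ (resp.\ $>-1$), which translates into $r<-1$ (resp.\ $r<0$) together with $(X,\Delta)$ being terminal (resp.\ dlt); in the dlt case, one additionally observes that the constructed resolution is an isomorphism over the snc locus because $h$ is.

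The main technical point is upgrading the $\Q$-linear equivalence obtained in the first paragraph to the equality \eqref{discrep}: since both sides are $\Q$-Cartier and agree outside the single $f$-exceptional divisor $X^-$, their difference is a rational multiple of $X^-$, and applying $f_*$ (together with $f_*(\pi^*\Delta)=\Delta_{C_a(X,L)}$ and the fact that $X^-$ is $f$-contracted) forces this multiple to vanish. The rest is a standard discrepancy bookkeeping exercise, made clean by the smoothness of $\pi$, which guarantees that no new singularities appear on $BC_a(X,L)$ away from $X^-$ beyond those already present on $X$, and consequently that divisorial valuations over $C_a(X,L)$ are organised into the one along $X^-$ and those pulled back from $X$ via $\pi$.
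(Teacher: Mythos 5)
First, a point of comparison: the paper does not prove this proposition at all --- it is quoted directly from \cite[Lemma 3.1]{Kol13} --- so your attempt has to be measured against Kollár's argument. Your derivation of the identity behind (\ref{discrep}) is essentially sound: $K_{BC_a(X,L)}=\pi^*(K_X+L)$ and $X^-\sim-\pi^*L$ do show that the left-hand side is $\Q$-linearly trivial, as is the right-hand side by Proposition \ref{Pic}. But the step you yourself single out as the ``main technical point'' is justified by a non sequitur: applying $f_*$ cannot pin down the coefficient of $X^-$, because $f_*$ annihilates \emph{every} multiple of the $f$-exceptional divisor $X^-$, so the two pushforwards agree no matter which coefficient you insert. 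What actually forces the difference $cX^-$ to vanish is that $cX^-$ is $\Q$-linearly (indeed $f$-numerically) trivial while $X^-$ is not, since $\mathcal{O}_{BC_a(X,L)}(X^-)|_{X^-}\simeq L^{\vee}$ is anti-ample; equivalently, one invokes the negativity lemma. This is a one-line repair, but as written the key step does not follow.

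The second half has a more serious gap in the context of this paper: it rests entirely on a log resolution $h\colon X'\to X$ of $(X,\Delta)$ and, for the dlt case, on a Szabó-type resolution that is an isomorphism over the snc locus. Here $k$ has characteristic $p\geq 3$ and $\dim X=2p+1\geq 7$, where the existence of (log) resolutions is open, so your argument does not establish the proposition in the generality in which it is stated. (It does cover the paper's actual input, where $(X,\Delta)=(X,E)$ is already log smooth and $h=\mathrm{id}$.) Kollár's proof is resolution-free: since $\pi$ is smooth, divisorial valuations over $X$ correspond to divisorial valuations over $BC_a(X,L)$ with the same discrepancies; formula (\ref{discrep}) identifies $a(E;C_a(X,L),\Delta_{C_a(X,L)})$ with $a(E;BC_a(X,L),\pi^*\Delta+(1+r)X^-)$ for every divisor $E$ over the cone; and when $1+r\leq 0$ the term $(1+r)X^-$ can only increase discrepancies, so the cone inherits the singularities of $(X,\Delta)$ together with the single new condition on $a(X^-)=-(1+r)$. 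Your ``if'' direction also quietly uses that terminality and dlt-ness can be checked on a single log resolution --- standard for terminal, but for dlt this is Szabó's characterisation, again a characteristic-zero tool. Either restrict the statement to the log smooth case actually used in the paper, or replace the resolution argument by Kollár's direct comparison of divisors over the cone with divisors over $X$.
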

\subsection{Projective cones}
Let $X$ be a normal variety over $k$ and let $L$ be an ample line bundle on $X$. We define the projective cone of $X$ with respect to $L$ as in \cite[Section 3.8]{Kol13}:
\[C_p(X,L)= \Proj_k \sum_{m \geq 0}(\sum_r^m H^0(X,L^r)x_{n+1}^{m-r}). \]
It contains as an open dense subset $C_a(X,L)$ and it admits a partial resolution
\[\pi \colon \mathbb{P}_X(\mathcal{O}_X \oplus L) \rightarrow C_p(X,L). \]
We show how to compute the cohomology of the structure sheaf of $C_p(X,L)$ in terms of the cohomology groups of $L$ on $X$:
\begin{prop}\label{cohomologyprojectivecone} For $i \geq 2$,
\[H^i(C_p(X,L), \mathcal{O}_{C_p(X,L)}) \simeq \sum _{m >0} H^{i-1}(X, L^{m}). \]
\end{prop}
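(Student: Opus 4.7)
The plan is to combine a Mayer--Vietoris computation on $C_p(X,L)$ with an explicit description of the complement of the vertex coming from the partial resolution $\pi$. Write $v \in C_p(X,L)$ for the vertex (the image of $X^-$), set $C_a := C_p(X,L) \setminus X_\infty \cong C_a(X,L)$, and $U := C_p(X,L) \setminus \{v\}$. Since $v \notin X_\infty$, these two open subsets cover $C_p(X,L)$, with intersection $C_a^\ast := C_a \setminus \{v\}$.

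As $C_a$ is affine, $H^i(C_a, \mathcal{O}) = 0$ for every $i \geq 1$, so the Mayer--Vietoris long exact sequence specializes, for $i \geq 2$, to
\begin{equation*}
H^{i-1}(U, \mathcal{O}) \xrightarrow{\alpha} H^{i-1}(C_a^\ast, \mathcal{O}) \to H^i(C_p(X,L), \mathcal{O}) \to H^i(U, \mathcal{O}) \xrightarrow{\gamma} H^i(C_a^\ast, \mathcal{O}).
\end{equation*}
It therefore suffices to compute the cohomology of $\mathcal{O}$ on $U$ and $C_a^\ast$ and to identify the maps $\alpha$ and $\gamma$.

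Since $\pi$ is an isomorphism away from $X^-$, the open $U$ is isomorphic to $Y \setminus X^-$, where $Y := \mathbb{P}_X(\mathcal{O}_X \oplus L)$. Removing a single section from the $\mathbb{P}^1$-bundle $Y \to X$ produces the total space of a line bundle: the remaining section $X^+$ has normal bundle $L$ and plays the role of the zero section, so $U \cong \Spec_X \bigoplus_{m \geq 0} L^{-m}$, and under this identification $X_\infty$ becomes the zero section. Deleting it gives $C_a^\ast \cong \Spec_X \bigoplus_{m \in \mathbb{Z}} L^{-m}$. Both structure maps to $X$ being affine, we obtain
\[
H^i(U, \mathcal{O}) = \bigoplus_{m \geq 0} H^i(X, L^{-m}), \quad H^i(C_a^\ast, \mathcal{O}) = \bigoplus_{m \in \mathbb{Z}} H^i(X, L^{-m}),
\]
and both $\alpha$ and $\gamma$ are the inclusions of the subsum indexed by $m \geq 0$, which are injective.

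Since $\gamma$ is injective, the Mayer--Vietoris sequence yields $H^i(C_p(X,L), \mathcal{O}) \cong \operatorname{coker}(\alpha) = \bigoplus_{m < 0} H^{i-1}(X, L^{-m}) = \bigoplus_{n > 0} H^{i-1}(X, L^n)$, which is the desired formula. The main subtlety is the geometric identification in the previous paragraph: one must carefully distinguish the two sections of $Y$, the contracted $X^-$ (with normal bundle $L^\vee$) from the section $X^+$ (with normal bundle $L$) that maps isomorphically onto $X_\infty$, in order to match $U$ with the total space of $L$ and $X_\infty$ with its zero section. Once this is sorted out, the rest is bookkeeping on graded direct sums of line bundles on $X$.
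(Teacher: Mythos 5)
Your proof is correct. It takes a formally different route from the paper's, but the substance is the same: the paper writes down the local cohomology long exact sequences of $\mathcal{O}$ at the vertex $v$ for both $C_p(X,L)$ and $C_a(X,L)$, uses that $H^i_v$ depends only on a neighbourhood of $v$ (so the two local cohomology groups agree) and that $C_a(X,L)$ is affine (so $H^{i-1}(C_a(X,L)\setminus v,\mathcal{O})\to H^i_v$ is an isomorphism for $i\geq 2$), and then identifies $H^i(C_p(X,L),\mathcal{O})$ with the cokernel of $\sum_{m\leq 0}H^{i-1}(X,L^m)\hookrightarrow\sum_{m\in\Z}H^{i-1}(X,L^m)$. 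Your Mayer--Vietoris sequence for the cover $\{C_a(X,L),\,C_p(X,L)\setminus v\}$ carries exactly the same information once affineness of $C_a(X,L)$ kills its higher cohomology, and it terminates in the identical cokernel computation. What your version buys is that it avoids local cohomology altogether: the only inputs are the affineness of the projections of $U$ and $C_a^\ast$ to $X$, the identification of $C_p(X,L)\setminus v$ with the total space of $L$ (with $X_\infty$ as zero section --- and you are right to flag that distinguishing the two sections of $\mathbb{P}_X(\mathcal{O}_X\oplus L)$ is the one place where a sign error in the grading could creep in), and the injectivity of the restriction maps $\alpha$ and $\gamma$, all of which you establish correctly.
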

\begin{proof}
We denote by $v$ the vertex of $C_a(X,L)\subset C_p(X,L)$ and we consider the natural inclusion 
\[V := C_a(X,L) \setminus v= \Spec_X \sum_{m \in \Z} L^m \subset  \Spec_X \sum_{m \leq 0} L^m= C_p(X,L) \setminus v =:U. \]
Considering the long exact sequences in local cohomology (see \cite[Chapter III, Ex 2.3]{Ha77}) we have the following natural commutative diagram
\begin{equation*}
\xymatrix{
	 \dots \ar[r] & H^{i-1}(U, \mathcal{O}_U) \ar[r]^{\delta_i \qquad \,} \ar[d]^{i^*} & H^i_v(C_p(X,L),\mathcal{O}_{C_p(X,L)}) \ar[r] \ar[d]^{\simeq} & H^i(C_p(X,L),\mathcal{O}_{C_p(X,L)})\ar[r] \ar[d]^{i^*} & \dots \\
	 \dots \ar[r] & H^{i-1}(V, \mathcal{O}_V) \ar[r]^{\eta_i \qquad \,} & H^i_v(C_a(X,L),\mathcal{O}_{C_a(X,L)}) \ar[r] & H^i(C_a(X,L),\mathcal{O}_{C_a(X,L)}) \ar[r] & \dots,
}
\end{equation*}
where by $i$ we mean the natural inclusion maps.
It is easy to see that the diagram
\begin{equation*}
\xymatrix@1{
	 H^{i}(U, \mathcal{O}_U) \ar[r]^{i^*} \ar[d]^\simeq & H^{i}(V,\mathcal{O}_V) \ar[d]^\simeq \\
	 \sum_{i \leq 0} H^{i-1}(X,L^m) \ar[r] & \sum_{m \in \Z} H^{i-1}(X, L^m)
}
\end{equation*}
commutes, where the bottom arrow is the natural inclusion, thus showing that $i^*$ is injective. Since for $i \geq 2$ the maps $\eta_i$ are isomorphisms (due to the vanishing $H^{i-1}(C_a(X,L), \mathcal{O}_{C_a(X,L)})=0$), we conclude that $\delta_i$ are injective and thus we have the following commutative diagram of short exact sequences:
\begin{equation*}
\xymatrix@1{
	 0 \ar[r]\ar[r] & H^{i-1}(U, \mathcal{O}_U) \ar[r]^{\delta_i \qquad \,} \ar[d]^{\simeq} & H^i_v(C_p(X,L),\mathcal{O}_{C_p(X,L)}) \ar[r] \ar[d]^{\simeq} & H^i(C_p(X,L),\mathcal{O}_{C_p(X,L)}) \ar[r] \ar[d]^{\simeq} & 0 \\
	 0 \ar[r] & \sum_{m \leq 0} H^{i-1}(X,L^m)\ar[r] & \sum_{m \in \Z} H^{i-1}(X,L^m) \ar[r] & H^i(C_p(X,L),\mathcal{O}_{C_p(X,L)}) \ar[r] & 0,
}
\end{equation*}
which concludes the proof.
\end{proof}
\section{Non-normal plt centres}
To prove the main result we consider a cone over the Fano variety $X$ constructed by Totaro and we show that the  prime divisor induced on the cone by a smooth section $E \in |A|$ is not normal.
\begin{proof}[Proof of Theorem \ref{main}]
Let us fix an algebraically closed field $k$ of characteristic $p \geq 3$. Let us consider the smooth Fano variety $X$ with the ample divisor $A$ of Theorem \ref{riassunto}. Since $A$ is very ample, by Bertini theorem there exists a smooth divisor $E \in |A|$. We define the pair $(Z,S):=(C_a(X,\mathcal{O}_{X}(A)), E_{C_a(X,\mathcal{O}_{X}(A))})$. Since $(X,E)$ is log smooth and $K_X+E \sim -A$ we conclude by Proposition \ref{Pic} that $K_Z+S$ is Cartier and by Proposition \ref{plt} that the pair $(Z,S)$ is dlt. Since $S$ is the only irreducible component in the boundary, the pair $(Z,S)$ is actually plt.  We denote by $(Y, S^Y)$ the pair given by $(BC_a(X, \mathcal{O}_X(A)), f_*^{-1}S)$. \\
\indent We check that $S$ is not normal. Let us note that
\[S = \Spec_k \sum_{m \geq 0} \text{Im}(H^0(X, \mathcal{O}_X(mA)) \rightarrow H^0(E, \mathcal{O}_E(mA)). \]
Let us consider the short exact sequence
\[0 \rightarrow \mathcal{O}_X(-E) \rightarrow \mathcal{O}_X \rightarrow \mathcal{O}_E \rightarrow 0, \]
and tensor it with $nA$, where $n$ is a positive integer. Taking the long exact sequence in cohomology we have
\[H^0(X, \mathcal{O}_X(nA)) \rightarrow H^0(E, \mathcal{O}_E(nA)) \rightarrow H^1(X, \mathcal{O}_X((n-1)A)) \rightarrow H^1(X, \mathcal{O}_X(nA)).\]
Since $A$ is ample and $H^1(X,\mathcal{O}(A)) \neq 0$, we can consider, by Serre vanishing, the largest $n \geq 2$ such that $H^1(X, \mathcal{O}_X((n-1)A)) \neq 0 $ and $ H^1(X, \mathcal{O}_X(nA))=0$. Thus the morphism 
\[H^0(X, \mathcal{O}_X(nA)) \rightarrow H^0(E, \mathcal{O}(nA)) \]
is not surjective and therefore the natural morphism
\[\nu \colon S^{\nu}:=C_a(E,L|_E) \rightarrow S, \]
is not an isomorphism. The morphism $\nu$ is finite and it is birational since for $m>n$ we have $H^0(E, \mathcal{O}_E (mA))= \text{Im} (H^0(X, \mathcal{O}_X(mA)) \rightarrow H^0(E, \mathcal{O}_E(mA)).$
Thus we conclude that the variety $S$ is not normal and that $\nu$ is the normalisation morphism, since $S^\nu$ is normal.
\end{proof}
\begin{rem}
We note that, since $\rho(X)=2$, the affine variety $Z$ is not $\Q$-factorial by Proposition \ref{Pic}.
\end{rem}
\begin{rem}
We point out that $S$ is regular in codimension one and thus by Serre's criterion we deduce that $S$ does not satisfy the $S2$ condition. In general, plt centres satisfy the $R1$ condition also in positive characteristic as explained in \cite[Lemma 2.5]{GNT15}.
\end{rem}
\begin{rem}
The normalisation morphism $\nu \colon S^{\nu} \rightarrow S$ is a universal homeomorphism since the morphism $S^Y \rightarrow S$ has connected fibers. In \cite[Theorem 3.15]{GNT15}, the authors prove, assuming the existence of pl-flips, that this is always the case for plt centres on threefolds in any characteristic.
\end{rem}
\begin{proof}[Proof of Corollary \ref{lift}] 
We verify that properties $(1)-(3)$ hold. The first property is immediate by construction of $(Y,S^Y)$ as an $\mathbb{A}^1$-bundle over a log smooth pair. To show property (2) we note that the divisor $-X^-$ is $f$-ample over the affine variety $Z$, thus it is ample. We have, by formula (\ref{discrep}) of Proposition \ref{plt}, that
\[K_Y+S^Y = f^*(K_Z+S),\]
and since $K_Z+S$ is ample on $Z$ we conclude property $(3)$  holds. \\
\indent We now show property (4). Since $S$ is not normal we have that the morphism
\begin{equation} \label{surj}
f_* \mathcal{O}_Y \rightarrow f_* \mathcal{O}_{S^Y}
\end{equation}
is not surjective. Indeed, since $S^Y \rightarrow S^{\nu} $ is a surjective birational projective morphism between normal varieties, we have by Zariski's main theorem the following commutative diagram:
\begin{equation*}
\xymatrix{
	 \mathcal{O}_Z \ar@{->>}[r] \ar[dd]_{\simeq} & \mathcal{O}_S \ar[dd] \ar[dr] \\
	 &  & \nu_*\mathcal{O}_{S^\nu} \ar[dl]_{\simeq}  \\
	 f_*\mathcal{O}_Y \ar[r] & f_*\mathcal{O}_{S^Y} &.
}
\end{equation*}
Since $\mathcal{O}_S \rightarrow \nu_*\mathcal{O}_{S^\nu}$ is not surjective, we conclude that the bottom arrow is not surjective. \\
\indent By Proposition \ref{Pic} we have that $K_Z+S \sim 0$ and thus, using the projection formula, we have
\[f_* \mathcal{O}_Y (m(K_Y+S^Y)) = f_* (f^*\mathcal{O}_Z(m(K_Z+S)))=  f_* \mathcal{O}_Y,\]
and in the same way
\[ f_* \mathcal{O}_{S^Y} (mK_{S^Y})= f_* \mathcal{O}_{S^Y}. \]
Since $Z$ is affine, we have therefore that the morphism in (\ref{surj}) is not surjective if and only if
\[H^0(Y, \mathcal{O}_Y(m(K_Y+S^Y))) \rightarrow H^0(S^Y, \mathcal{O}_{S^Y}(mK_{S^Y})), \]
is not surjective for any integer $m > 0$, thus concluding the proof.
\end{proof}
\section{Terminal Fano varieties with $H^2(\mathcal{O}_Z) \neq 0$}
In this section we prove Theorem \ref{terminalFanononvanishingcohomology}. Since we need to understand the positivity properties of the canonical divisor of a projective bundle we recall the Euler sequence. Let $E$ be a vector bundle of rank $r+1$ on a variety $X$ and let $\pi \colon \mathbb{P}_X(E) \rightarrow X$ be the associated projective bundle of hyperplanes, then we have the following short exact sequence:
\[ 0 \rightarrow \Omega^1_{\mathbb{P}(E)/X} \rightarrow \mathcal{O}_{\mathbb{P}(E)}(-1) \otimes \pi^* E \rightarrow \mathcal{O}_{\mathbb{P}(E)} \rightarrow 0, \]
which shows $K_{\mathbb{P}(E)}= \mathcal{O}_{\mathbb{P}(E)}(-r-1) \otimes \pi^*(K_X \otimes \det E)$.
\begin{proof}[Proof of Theorem \ref{terminalFanononvanishingcohomology}]
Let us fix an algebraically closed field $k$ of characteristic $p \geq 3$ and let us consider the Fano variety $X$ with the ample divisor $A$ of Theorem \ref{riassunto}. We define the projective variety $Z:= C_p(X, \mathcal{O}_X(A))$. Since $H^1(X, \mathcal{O}_X(A)) \neq 0$, we conclude $H^2(Z, \mathcal{O}_Z) \neq 0$ by Proposition \ref{cohomologyprojectivecone}. The variety $Z$ has terminal singularities since the only singular point is the vertex of the cone $C_a(X,\mathcal{O}_X(A))$. \\
\indent We are only left to prove that $Z$ is a Fano variety. For this it is sufficient to check that the projective bundle $\pi \colon Y:=\mathbb{P}_X(\mathcal{O}_X \oplus \mathcal{O}_X(-A)) \rightarrow X$ is Fano. On $Y$ there is a unique negative section $X^-$ such that $\mathcal{O}_{Y}(X^-)|_{X^-}\simeq \mathcal{O}_X(-A)$. There exists also a positive section $X^+$ such that $\mathcal{O}_{Y}(X^+)|_{X^+}\simeq \mathcal{O}_{X}(A)$ (which implies that $X^+$ is a big and nef divisor on $Y$) and such that  $X^+ \sim X^-+\pi^*A$. We note that
\[\mathcal{O}_Y(1) = \mathcal{O}_Y(X^-),\]
and by the relative Euler sequence we have
\[K_{Y} =\mathcal{O}_Y(-2) \otimes \pi^{*}(K_X-A). \]
Thus we have that the anticanonical class
\begin{equation} \label{anticanonical}
-K_Y=2X^- + \pi^*(3A)=2X^+ + \pi^*A,
\end{equation}
is a big and nef divisor. To conclude that $-K_Y$ is ample, we show that the null locus $\text{Null}(-K_Y)$ (see \cite[Definition 10.3.4]{Laz04-2}) is empty. By equation (\ref{anticanonical}), the null locus must be contained in $X^-$, but since $-K_Y|_{X^-}=A$ is ample we conclude it must be empty.
\end{proof}
\section{Open questions}
A question that naturally arises from Theorem \ref{main} and from the result of \cite{HX15} for threefolds in characteristic $p>5$ is the following: 
\begin{question}
Fixed a positive integer $n$, does there exists a number $p_0(n)$ such that $n$-dimensional plt centres in characteristic $p \geq p_0(n)$ are normal?
\end{question}
A guiding philosophy in the study of MMP singularities in large characteristic is that their good behaviour should correspond to Kodaira-type vanishing results for (log) Fano varieties (as an instance of this principle see the main result of \cite{HW17}).    \\
\indent If one is interested in developing the MMP for threefolds in characteristic $p \leq 5$, it would be important to know an optimal bound for $p_0(3)$. Since we know by \cite[Theorem 1.1]{CT16} that there exists a threefold plt pair with non-normal centre in characteristic two we ask the following
\begin{question}
Are plt threefold centres normal in characteristic three and five?
\end{question} 
In \cite[Theorem 2]{Ber17} we constructed an example of a klt not Cohen-Macaulay threefold singularity in characteristic three. This suggests that more pathologies for threefold singularities could appear in characteristic three.
\bibliographystyle{amsalpha}
\bibliography{bibliography}
\Addresses
\end{document}